\def\OM#1{(\textcolor{orange}{OM:#1})}
\theoremstyle{plain}
\newtheorem{theorem}{Theorem}[section]
\newtheorem{theorem*}{Theorem}
\newtheorem{lemma}[theorem]{Lemma}
\newtheorem{corollary}[theorem]{Corollary}
\crefname{theorem}{Theorem}{Theorems}
\crefname{conjecture}{Conjecture}{Conjectures}
\crefname{lemma}{Lemma}{Lemmas}
\theoremstyle{definition}
\newtheorem{example}[theorem]{Example}
\newtheorem{remark}[theorem]{Remark}
\numberwithin{equation}{section}
\numberwithin{figure}{section}
\numberwithin{table}{section}
\DeclareMathOperator{\ST}{ST}
\newcommand{\T}{\mathcal{T}}
\DeclareMathOperator{\arm}{arm}
\DeclareMathOperator{\leg}{leg}
\DeclareMathOperator{\Des}{Des}
\DeclareMathOperator{\maj}{maj}
\DeclareMathOperator{\coinv}{coinv}
\DeclareMathOperator{\wt}{wt}
\DeclareMathOperator{\rw}{rw}
\DeclareMathOperator{\NAT}{NAT}
\DeclareMathOperator{\std}{std}
\DeclareMathOperator{\South}{South}
\DeclareMathOperator{\dg}{dg}
\DeclareMathOperator{\inc}{inc}
\DeclareMathOperator{\dec}{dec}
\DeclareMathOperator{\ID}{ID}
\DeclareMathOperator{\Nu}{\widehat{V}}
\newlength\cellsize \setlength\cellsize{12\unitlength}
\newcommand\cellify[1]{\def\thearg{#1}\def\nothing{}%
\ifx\thearg\nothing
\vrule width0pt height\cellsize depth0pt\else
\hbox to 0pt{\usebox2\hss}\fi%
\vbox to 12\unitlength{
\vss
\hbox to 12\unitlength{\hss$#1$\hss}
\vss}}
\newcommand\tableau[1]{\vtop{\let\\=\cr
\setlength\baselineskip{-16000pt}
\setlength\lineskiplimit{16000pt}
\setlength\lineskip{0pt}
\halign*{&\cellify{##}\cr#1\crcr}}}
\newcommand\expath[1]{%
\hbox to 0pt{\usebox3\hss}%
\vbox to 12\unitlength{
\vss
\hbox to 12\unitlength{\hss$#1$\hss}
\vss}}
\newcommand\cell[3]{
\def\i{#1} \def\j{#2} \def\entry{#3}
\draw (\j-1,-\i)--(\j,-\i)--(\j,-\i+1)--(\j-1,-\i+1)--(\j-1,-\i);
\node at (\j-.5,-\i+.5) {\entry};
}
\title{Expanding the quasisymmetric Macdonald polynomials in the fundamental basis}
\date{\today}
\author[Corteel]{Sylvie Corteel}
\address{Department of Mathematics, UC Berkeley, USA}
\email{corteel@berkeley.edu}
\author[Mandelshtam]{Olya Mandelshtam}
\address{Department of Mathematics, Brown University, USA}
\email{olya@math.brown.edu}
\author[Roberts]{Austin Roberts}
\address{Department of Mathematics, Highline College, USA}
\email{aroberts@highline.edu}
\begin{document}

\maketitle

\begin{abstract}
The quasisymmetic Macdonald polynomials $G_{\gamma}(X;q,t)$ were recently introduced by the first and second authors with Haglund, Mason, and Williams in \cite{CHMMW} to refine the symmetric Macdonald polynomials $P_{\lambda}(X;q,t)$ with the property that $G_{\gamma}(X;0,0)$ equals $QS_{\gamma}(X)$, the quasisymmetric Schur polynomial of \cite{HLMvW11}. We derive an expansion for $G_{\gamma}(X;q,t)$ in the fundamental basis of quasisymmetric functions.
\end{abstract}

\section{Introduction}

The symmetric \emph{Macdonald polynomials} $P_{\lambda}(X; q, t)$ \cite{Macdonald} are a family of functions in $X = \{x_1, x_2,\dots \}$ indexed by partitions, whose coefficients depend on two parameters $q$ and $t$. They can be defined as the unique monic basis for the ring of symmetric functions that satisfies certain triangularity and orthogonality conditions. Macdonald polynomials generalize multiple important families of polynomials, including Schur polynomials and Hall--Littlewood polynomials.
 
The related \emph{nonsymmetric Macdonald polynomials} $E_{\mu}(X;q,t)$ were introduced shortly after as a tool to study Macdonald polynomials, in a series of papers by Cherednik \cite{Cher1}, Macdonald \cite{Mac95}, and Opdam \cite{Opd95}.
The polynomials $E_{\mu}(X;q, t)$ are indexed by weak compositions and form a basis for the full polynomial ring $\mathbb{Q}[X](q,t)$. Ferreira \cite{Fer11} and later Alexandersson \cite{Ale16} studied the extension of these to the more general \emph{permuted basement} nonsymmetric Macdonald polynomials $E^\sigma_{\mu}(X;q,t)$, where $X= \{x_1, \ldots, x_n\}$,  $\sigma\in S_n$, and the length of $\mu$ is $n$.

The combinatorics of Macdonald polynomials has been actively studied for decades. In \cite{HHL05}, Haglund, Haiman, and Loehr gave a combinatorial formula for the \emph{modified Macdonald polynomials},  $\widetilde{H}_{\lambda}(X;q,t)$. A formula for the \emph{integral form},  $J_{\lambda}(X; q,t)$, was then given in \cite{HHL08}. 
Important to our purposes, this paper also provided a formula for the nonsymmetric Macdonald polynomials $E_{\mu}(X;q,t)$, which was then broadened to the more general polynomials  $E^\sigma_{\mu}(X;q,t)$ in \cite{Ale16,Fer11}.

In \cite{CHMMW}, the first and second authors together with Haglund, Mason, and Williams %
introduced a new family of quasisymmetric functions $G_{\gamma}(X; q, t)$ they named \emph{quasisymmetric Macdonald polynomials}.  They showed that $G_{\gamma}(X; q, t)$ is indeed a quasisymmetric function, and gave a combinatorial formula for the $G_{\gamma}(X; q, t)$ %
that refines the compact formula for the $P_{\lambda}$ from \cite{CMW18}. The Macdonald polynomial $P_{\lambda}(X; q, t)$ is a sum of these quasisymmetric Macdonald polynomials, and %
at $q=t=0$, $G_{\gamma}(X; q, t)$ specializes to the \emph{quasisymmetric Schur functions} $\text{QS}_{\gamma}(X)$ introduced by Haglund, Luoto, Mason, and van Willigenburg in \cite{HLMvW11}.  

The goal of this article is to write an expansion of the polynomials $G_{\gamma}(X; q, t)$ in the fundamental basis. This basis was introduced by Gessel in \cite{Ges84} and is one of the most common bases of the vector space of quasisymmetric functions. Our main results are the following Theorems, see \cref{sec:def} for the relevant definitions.
\begin{theorem}\label{thm:q}
Let $\gamma$ be a strong composition. Then
\begin{align*}
G_{\gamma}(X;q,t)= & \sum_{\tau\in\ST(\gamma)} t^{\coinv(\tau)}q^{\maj(\tau)} \left( \prod_{\substack{u\in\widehat{\dg}(\gamma)\\u\not\in W(\tau)}} \frac{1-t}{1-q^{\leg(u)+1}t^{\arm(u)+1}} \right) \\
&\times \sum_{U \subseteq W(\tau)} (-t)^{|U|} \left( \prod_{u \in U} \frac{1-q^{\leg(u)+1}t^{\arm(u)}}{1-q^{\leg(u)+1}t^{\arm(u)+1}} \right)F_{V(\tau)\cup U}.
\end{align*}\label{main}
\end{theorem}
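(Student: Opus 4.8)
The plan is to derive the formula from the combinatorial formula for $G_{\gamma}(X;q,t)$ of \cite{CHMMW} (which refines the compact formula for $P_{\lambda}$ of \cite{CMW18}), expressing $G_{\gamma}$ as a weighted sum over a family of semistandard fillings of the diagram $\dg(\gamma)$: each filling $T$ contributes its content monomial $x^{\wt(T)}$, a power $q^{\maj(T)}t^{\coinv(T)}$ recording the relevant statistics, and a product of deformation factors of the shape $\frac{1-t}{1-q^{\leg(u)+1}t^{\arm(u)+1}}$ attached to cells $u$ of the augmented diagram $\widehat{\dg}(\gamma)$. The overall strategy is \emph{standardization}. I would partition the fillings $T$ according to their standardization $\tau=\std(T)\in\ST(\gamma)$ and rewrite
\begin{align*}
G_{\gamma}(X;q,t)=\sum_{\tau\in\ST(\gamma)}\ \sum_{T:\,\std(T)=\tau} x^{\wt(T)}\cdot\big(q,t\text{-weight of }T\big),
\end{align*}
then evaluate the inner sum for each fixed $\tau$. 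Since $\arm$ and $\leg$ depend only on the cell, while $\maj$ and $\coinv$ are constant on each standardization class, the prefactor $t^{\coinv(\tau)}q^{\maj(\tau)}$ together with the product over the cells $u\notin W(\tau)$ can be pulled outside the inner sum.

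The next step is the conversion of the surviving monomial sum into fundamental quasisymmetric functions. For a fixed $\tau$, the semistandard fillings standardizing to $\tau$ are indexed by the weakly increasing relabelings compatible with the reading order of $\tau$, and summing $x^{\wt(T)}$ over such relabelings produces a fundamental quasisymmetric function whose indexing set is governed by the descents of $\tau$; the forced descents assemble into $V(\tau)$. The distinguished cells of $W(\tau)$ are exactly those at which two entries equal after standardization may be taken either equal or strictly increasing in $T$, where this choice changes the deformation weight. The computation is driven by the algebraic identity
\begin{align*}
\frac{1-t}{1-q^{\leg(u)+1}t^{\arm(u)+1}}=1+(-t)\,\frac{1-q^{\leg(u)+1}t^{\arm(u)}}{1-q^{\leg(u)+1}t^{\arm(u)+1}}.
\end{align*}
Expanding the product of these factors over $u\in W(\tau)$ yields the inclusion--exclusion sum $\sum_{U\subseteq W(\tau)}(-t)^{|U|}\prod_{u\in U}\frac{1-q^{\leg(u)+1}t^{\arm(u)}}{1-q^{\leg(u)+1}t^{\arm(u)+1}}$, where $U$ records the cells at which the strict increase (the optional descent) is taken, and the fundamental index is correspondingly enlarged from $V(\tau)$ to $V(\tau)\cup U$.

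The main obstacle is the descent bookkeeping that ties the filling combinatorics to the fundamental basis. I would need an intrinsic description of $W(\tau)$ as the set of cells where equal-standardization entries can be permuted without violating the filling conditions, a proof that taking the strict increase at such a cell adds precisely the descent contributed to $U$ (and changes the weight by exactly the factor above), and a verification that $V(\tau)$ and $U$ are always disjoint, so that no descent is double-counted in $V(\tau)\cup U$. Confirming that the geometric summation over runs of equal entries collapses to exactly $\frac{1-t}{1-q^{\leg(u)+1}t^{\arm(u)+1}}$ at every relevant cell, so that the splitting identity applies uniformly, is the technical heart of the argument. As a consistency check, specializing $q=t=0$ makes every deformation factor equal to $1$, kills all $\tau$ with $\coinv(\tau)>0$ or $\maj(\tau)>0$, and forces $U=\emptyset$, thereby recovering the known fundamental expansion of the quasisymmetric Schur function $\text{QS}_{\gamma}=G_{\gamma}(X;0,0)$.
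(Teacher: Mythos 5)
Your proposal is correct and takes essentially the same route as the paper's own proof: decompose the packed non-attacking fillings by standardization, pull out $t^{\coinv(\tau)}q^{\maj(\tau)}$ and the factors for cells outside $W(\tau)$, apply the splitting identity $\frac{1-t}{1-q^{\leg(u)+1}t^{\arm(u)+1}}=1+(-t)\frac{1-q^{\leg(u)+1}t^{\arm(u)}}{1-q^{\leg(u)+1}t^{\arm(u)+1}}$ together with the product-form binomial expansion (\cref{bino}), and convert the resulting monomial sums over supersets of $V(\tau)\cup U$ into fundamental functions via \eqref{eq:F}. The technical verifications you flag are exactly the paper's \cref{lem:std,lem:delta,lem:wtq} (the destandardization bijection, the weight of a destandardized filling, and the disjointness $V(\tau)\cap W(\tau)=\emptyset$), so your plan matches the published argument step for step.
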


\begin{theorem}\label{thm:HL}
Let $\gamma$ be a strong composition. Then
\begin{align*}
G_{\gamma}(X;0,t)& =
\sum_{\tau\in\ST_{1}(\gamma)}
(1-t)^{\omega(\tau)}
(-t)^{|\Des(\tau)|}
t^{\coinv(\tau)-\coinv(\Des(\tau))} 
 F_{\Nu(\tau)}.
\end{align*}

\end{theorem}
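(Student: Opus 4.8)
The plan is to derive \cref{thm:HL} by specializing \cref{thm:q} at $q=0$ and then repackaging the result. Setting $q=0$, the prefactor $q^{\maj(\tau)}$ annihilates every term with $\maj(\tau)>0$, so the outer sum over $\ST(\gamma)$ restricts to the standard tableaux with $\maj(\tau)=0$; consulting the definitions in \cref{sec:def}, I expect this restricted set to be exactly (or to be in natural bijection with) the indexing set $\ST_1(\gamma)$. Because $\leg(u)+1\ge 1$ for every cell $u$, each power $q^{\leg(u)+1}$ also vanishes at $q=0$: the outer product therefore collapses to $(1-t)^{\omega(\tau)}$, where $\omega(\tau)$ records the number of cells of $\widehat{\dg}(\gamma)$ lying outside $W(\tau)$, and every factor $\frac{1-q^{\leg(u)+1}t^{\arm(u)}}{1-q^{\leg(u)+1}t^{\arm(u)+1}}$ of the inner product becomes $1$.

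After these reductions \cref{thm:q} reads
\[
G_{\gamma}(X;0,t)=\sum_{\substack{\tau\in\ST(\gamma)\\ \maj(\tau)=0}} t^{\coinv(\tau)}\,(1-t)^{\omega(\tau)}\sum_{U\subseteq W(\tau)}(-t)^{|U|}\,F_{V(\tau)\cup U},
\]
so the entire content of \cref{thm:HL} is to rewrite this double sum as a single sum of fundamental functions. The strategy I would pursue is to construct a statistic-preserving bijection between the pairs $(\tau,U)$ appearing above and the objects $\sigma\in\ST_1(\gamma)$, under which a pair $(\tau,U)$ is recovered from $\sigma$ by forgetting the descent data: the image should satisfy $\Nu(\sigma)=V(\tau)\cup U$, $|\Des(\sigma)|=|U|$, and $\omega(\sigma)=\omega(\tau)$, together with the decomposition $\coinv(\sigma)=\coinv(\tau)+\coinv(\Des(\sigma))$. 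Granting these identities, each summand $t^{\coinv(\tau)}(1-t)^{\omega(\tau)}(-t)^{|U|}F_{V(\tau)\cup U}$ matches the summand $(1-t)^{\omega(\sigma)}(-t)^{|\Des(\sigma)|}t^{\coinv(\sigma)-\coinv(\Des(\sigma))}F_{\Nu(\sigma)}$ of \cref{thm:HL} exactly, so the two expressions agree term by term with no cancellation required. (I write $\sigma$ for the $\ST_1(\gamma)$ variable, which the statement names $\tau$, to keep it distinct from the $\ST(\gamma)$ variable.)

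The main obstacle is establishing the coinv decomposition $\coinv(\sigma)=\coinv(\tau)+\coinv(\Des(\sigma))$ and the index identity $\Nu(\sigma)=V(\tau)\cup U$ simultaneously with the bijectivity of the map. Concretely, I would define the map by promoting the chosen cells of $U\subseteq W(\tau)$ to genuine descents of the refined standard object $\sigma$, and then verify that this promotion (i) leaves the cells outside $W(\tau)$ untouched, so that $\omega$ is preserved; (ii) contributes precisely $\coinv(\Des(\sigma))$ new coinversions beyond $\coinv(\tau)$, which is where the careful bookkeeping of how descent cells interact with the reading order is needed; and (iii) produces the fundamental index $V(\tau)\cup U=\Nu(\sigma)$. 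Inverting the map amounts to reading off $\Des(\sigma)$ and deleting the descent data to recover the pair $(\tau,U)$, so the crux is really the coinv splitting in step (ii); once that is in hand, the remaining verifications that the construction is well defined and reversible should be routine.
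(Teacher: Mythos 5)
Your plan is structurally the paper's own proof: specialize \cref{thm:q} at $q=0$ (this is the paper's equation \eqref{eq:HL}), then convert the double sum over pairs $(\tau,U)$ with $\tau$ descent-free and $U\subseteq W(\tau)$ into a single sum over $\ST_1(\gamma)$ via a bijection that ``promotes'' $U$ to a descent set while tracking $\omega$, $|U|$, $\coinv$, and the fundamental index. Two points, however, keep the proposal from being a proof. The first is a misstatement: the tableaux surviving at $q=0$ are \emph{not} $\ST_1(\gamma)$ but $\ST_0(\gamma)$, the standard fillings with empty descent set; $\ST_1(\gamma)$ permits descents $i$ with $\South(i)=i-1$ and is strictly larger in general. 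Your displayed formula correctly sums over $\{\tau:\maj(\tau)=0\}$, so the slip does not propagate, but the sentence as written asserts a false identification --- the surplus of $\ST_1(\gamma)$ over $\ST_0(\gamma)$ is exactly what absorbs the pairs with $U\neq\emptyset$, one new tableau per pair.

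The second, more serious point is that the steps you defer as ``routine'' or as ``the crux\dots once that is in hand'' are the entire content of the paper's proof, and the coinversion step in particular is genuinely delicate. The map you want is the paper's $\iota_U$: for each maximal run $\{i,\ldots,i+k-1\}\subseteq U$, the values $i,i+1,\ldots,i+k$ occupy a vertical strip of $\tau$ (increasing downward), and one reverses them within that strip; re-sorting columns inverts this, giving the bijection onto $\ST_1(\gamma)$ with $\Des$ corresponding to $U$. For the coinv identity the correct argument is local: since $\iota_U$ permutes values only within strips of consecutive integers, the coinversion status of a triple can change only if its vertical pair $(x,\South(x))$ lies inside a reversed strip; every such triple flips from non-coinversion to coinversion, and the number of triples anchored at a cell $u$ is exactly $\arm(u)$, so the total gain is $\sum_{u\in\Des(\sigma)}\arm(u)=\coinv(\Des(\sigma))$, which is your splitting. (A warning that this bookkeeping is subtle: \emph{both} type A and type B triples anchored at a reversed pair flip --- the paper's prose mentions only type B, yet its compensation by $\arm(u)$ is correct precisely because $\arm(u)$ counts the anchored triples of both kinds; one can check this on $\gamma=(2,2)$.) Finally, $\Nu(\sigma)=V(\tau)\cup U$ also requires an argument, which the paper supplies: elements of $U$ land after their successors in reading order, hence in $\ID(\sigma)$, while for $i\notin U$ attacking pairs and inverse descents of $\tau$ become attacking-through-descents in $\sigma$ and no new elements arise. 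Until these verifications are written out, what you have is a correct outline of the paper's argument rather than a proof of \cref{thm:HL}.
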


This article proceeds through a series of purely combinatorial proofs and results using a variety of tableaux enumeration techniques, organized as follows.
In \cref{sec:def}, we provide the relevant background. \cref{sec:packed} provides a proof for \cref{thm:q}. In \cref{sec:HL} we provide an alternative expansion in the Hall-Littlewood case, yielding \cref{thm:HL} and a related result for Jack polynomials.

\section{Preliminaries and definitions}
\label{sec:def}

For a nonnegative integer $n$, a \emph{weak composition} $\alpha=(\alpha_1,\ldots,\alpha_k) \models n$ is a list of nonnegative integers called the \emph{parts} of $\alpha$, summing to $n$, so that $n=|\alpha|=\sum_{i=1}^k \alpha_i$.  Let $\alpha^+$ denote the composition obtained by collapsing the (weak) composition $\alpha$ by removing the zero-parts from $\alpha$. We call a composition with no non-zero parts a \emph{strong composition}. If $\alpha_1\geq \alpha_2 \geq \cdots \geq \alpha_k$, then $\alpha$ is called a \emph{partition}. We denote by %
$\inc(\alpha)$ the composition obtained by sorting the parts of $\alpha$ in increasing order. Define $\beta(\alpha)$ to be the permutation of \emph{longest length} such that $\beta(\alpha) \circ \alpha = \inc(\alpha)$, where the length of a permutation is the number of inversions in its word representation.

\begin{example}
For $\alpha=(2,1,0,0,3,0,1)$, we have $\alpha^+=(2,1,3,1)$, %
$\inc(\alpha)=(0,0,0,1,1,2,3)$, and $\beta(\alpha)=(6,4,3,7,2,1,5)$.
\end{example}

For two compositions, $\alpha,\beta$, we say $\beta$ is a \emph{refinement} of $\alpha$, denoted by $\beta<\alpha$, if $\alpha$ can be obtained by adding together adjacent parts of $\beta$. For example, $(2,1,3,1)<(2,4,1)<(2,5)<(7)$. Finally, there is a natural bijection from (strong) compositions $\alpha\models n$ with $|S|+1$ parts to subsets $S\in [n-1]$, given by taking the difference between successive elements of $\{0,n\}\cup{S}$, after elements of this set are listed in order. Specifically, the subset $S$ corresponding to a composition $\alpha=(\alpha_1,\ldots,\alpha_k)$ is
\[
S = \{\alpha_1,\alpha_1+\alpha_2,\ldots,\alpha_1+\alpha_2+\cdots+\alpha_{k-1}\},
\]
and the composition $\alpha\models n$ corresponding to a subset $S=\{i_1,i_2,\ldots,i_{k-1}\}\subseteq [n-1]$ is
\[
\alpha=(i_1,i_2-i_1,i_3-i_2,\ldots,n-i_{k-1}).
\]

\begin{example}
$\alpha=(2,1,3,2)$ corresponds to the subset $S=\{2,3,6\}\subseteq [7] $. 
\end{example}

\subsection{Quasisymmetric functions}
The vector space of \emph{quasisymmetric functions} properly contains the space of symmetric functions. A quasisymmetric function is a bounded degree formal power series $f\in \mathbb{F}[x_1,x_2,\ldots]$ such that for all $k$, all compositions $\alpha=(\alpha_1,\alpha_2,\ldots,\alpha_k)$, and all sets of indices $i_1<i_2<\cdots<i_k$, the coefficients of $x_1^{\alpha_1}\ldots x_k^{\alpha_k}$ and $x_{i_1}^{\alpha_1}\ldots x_{i_k}^{\alpha_k}$ in $f$ are equal. 

Similar to the symmetric functions, the vector space of quasisymmetric functions has several natural bases consisting of functions of fixed degree. We will focus on the \emph{monomial basis} $\{M_S\}$ and the \emph{fundamental basis} $\{F_S\}$, indexed by subsets $S \subset [n-1]$, for each fixed degree $n$. The monomial basis functions are defined as
\begin{equation}
M_S := \sum_{i_1 < i_2 < \cdots < i_{k}} x_{i_1}^{\alpha_1}x_{i_2}^{\alpha_2}\cdots x_{i_k}^{\alpha_k}
\end{equation}
where $k=|S|+1$, and $\alpha$ is the (strong) composition corresponding to the subset $S$.

The fundamental basis functions are defined as
\begin{equation}
F_S := \sum_{i_1 \leq i_2 \leq \cdots \leq i_n \atop  j \in S \implies i_j \neq i_{j+1}} x_{i_1}x_{i_2}\cdots x_{i_n}.
\end{equation}

\noindent
For example, 
\begin{align*}
M_{\{2,3,6\}} &= \sum_{i_1 < i_2 < i_3 < i_{4}} x_{i_1}^{2}x_{i_2}^{1}x_{i_3}^3 x_{i_4}^{2},\\
F_{\{2,3,6\}}&=\sum_{i_1 \leq i_2 < i_3 < i_4\leq i_5\leq i_6<i_7\leq i_8} x_{i_1}x_{i_2}\cdots x_{i_8}.
\end{align*}

Let $S\subseteq [n-1]$. It follows that %
\begin{equation}\label{eq:F}
F_{S}=\sum_{S\subseteq S'} M_{S'}.
\end{equation}

\noindent
For example, let $n=8$ and $S=\{1,4\}$. Then %
\begin{align*}
F_{\{1,4\}} &= M_{\{1,4\}}+M_{\{1,2,4\}}+M_{\{1,3,4\}}+M_{\{1,2,3,4\}}.
\end{align*}

The goal of this article is to give an expansion of the quasisymmetric Macdonald polynomial $G_{\gamma}(X;q,t)$, which we present below, in terms of the fundamental quasisymmetric basis. Let $\gamma$ be a strong composition. The quasisymmetric Macdonald polynomial is defined by the infinite sum
\begin{align}\label{def: Mac}
G_{\gamma}(X;q,t) =  \sum_{\alpha:\ \alpha ^{+}= \gamma}  E_{\inc(\alpha)}^{\beta (\alpha )}(X;q,t),
\end{align}
where $E_{\mu}^{\sigma}(X;q,t)$ is the \emph{permuted basement Macdonald polynomial} introduced in \cite{Fer11} and further studied in \cite{Ale16}. We will define $G_{\gamma}$ combinatorially in the next section. Note that $E_{\mu}^{\sigma}$ is a polynomial in $k$ variables, where $k$ is the number of parts of $\mu$, so we actually mean $E_{\mu}^{\sigma}(X;q,t)=E_{\mu}^{\sigma}(x_1,\ldots,x_k;q,t)$, and $\sigma\in S_k$. %

\begin{remark}
Due to \cite{CHMMW}, it turns out that $E_{\inc(\alpha)}^{\beta (\alpha )}(X;0,t) = E_{\alpha}^{id}(X;0,t)$. Thus the quasisymmetric Hall--Littlewood polynomials $\mathcal{L}_{\alpha}(X;t)$, defined in \cite{HLMvW11} as
\[
\mathcal{L}_{\gamma}(X;t) = \sum_{\alpha : \alpha^+ = \gamma} E_{\alpha}^{id}(X;0,t)
\]
coincide with $G_{\gamma}(X;0,t)$.
\end{remark}

\subsection{Tableaux formula for $E_{\mu}^{\sigma}(X;q,t)$}

The polynomial $E_{\mu}^{\sigma}(X;q,t)$ %
has a combinatorial description in the form of a tableaux formula \cite{HHL05}. We review the relevant statistics for general compositions, though we  will be primarily concerned with the case where the parts of $\mu$ are arranged in weakly increasing order.

For any weak composition $\alpha$, define $\dg(\alpha)$, the diagram of $\alpha$, to be the composition shape in French notation with $\alpha_i$ boxes in column $i$ from left to right. The rows are labeled from bottom to top starting with row 1, and a cell in row $r$ and column $c$ is denoted by coordinates $(r,c)\in\dg(\alpha)$. Define $\widehat{\dg}(\alpha)$ to be the set of cells in $\dg(\alpha)$ that are not contained in the bottom row.  If $T$ is a filling of $\dg(\alpha)$, the entry in a cell $u\in\dg(\alpha)$ is denoted by $T(u)$. Let $x^T=\prod_{u \in \dg(\alpha)} x_{T(u)}$, be the monomial encoding the content of $T$.

The \emph{reading order} of a diagram is the total order given by reading the entries along the rows from top to bottom, and from left to right within each row. Two cells are said to \emph{attack} each other if they are in the same row, or if they are in adjacent rows where the one above is strictly northeast of the one below. %
A filling $T$ is considered \emph{non-attacking} if $T(u)\neq T(v)$ for any pair of attacking cells $u,v$. %

For a cell $u\in\dg(\alpha)$, we call $\leg(u)$ the number of cells above $u$ in the same column. We call $\arm(u)$ the number of cells to the right of $u$ in columns whose height does not exceed the height of the column containing $u$, plus the number of cells to the left of $u$ in columns of height strictly smaller than the height of the column containing $u$. More precisely, let $u=(r,i)$. Then
\begin{align*}
\arm(u) =& |\{(r,j)\in\dg(\alpha):\ j>i, \alpha_j\leq \alpha_i\}|+|\{(r-1,j)\in\dg(\alpha): j<i, \alpha_j< \alpha_i\}|
\end{align*} 
See \cref{fig:armleg}. Denote by $\South(u)$ the cell directly below $u$ in the same column. The set of descents of a filling of $\dg(\alpha)$ is
\[
\Des(T) = \{u\in\widehat{\dg}(\alpha)\ :\ T(u)>T(\South(u))\},
\]
and the \emph{major index} is
\[
\maj(T) = \sum_{u\in\Des(T)} \leg(u)+1.
\]

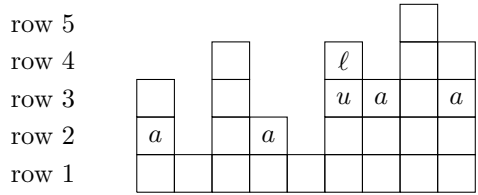
\begin{figure}
  \centering
\begin{tikzpicture}[scale=.5]
\cell39{$a$} \cell38{} \cell37{$a$} \cell36{$u$}  \cell33{$$}\cell31{$$}
\cell49{} \cell48{} \cell47{$$}\cell46{}  \cell44{$a$} \cell43{} \cell41{$a$} %
\cell18{} 
\cell29{} \cell28{} \cell26{$\ell$} \cell23{}
\cell59{$$} \cell58{$$}  \cell57{$$} \cell56{$$}  \cell55{$$}  \cell54{$$}  \cell53{$$}  \cell52{$$}  \cell51{$$} 

\node at (-2.5,-.5) {row 5};
\node at (-2.5,-1.5) {row 4};
\node at (-2.5,-2.5) {row 3};
\node at (-2.5,-3.5) {row 2};
\node at (-2.5,-4.5) {row 1};
\end{tikzpicture} 
\caption{The diagram of the composition $(3,1,4,2,1,4,3,5,4)$ and the cells in the leg and the arm of the cell $u=(3,6)$. Here $\leg(u)=1$ and $\arm(u)=4$.}
\label{fig:armleg}
\end{figure}

\emph{Triples} consist of a cell $x$, the cell $y=\South(x)$ directly below, and a third cell $z$ in the arm of $x$. If $z$ is in the same row as $x$, this is called a \emph{type A triple}, and if $z$ is in the same row as $y$, this is called a \emph{type B triple}, as shown

\begin{align*}
\begin{tabular}{b{3.5em}m{5em}b{3.5em}m{3.5em}}
\text{Type A: } & \begin{tikzpicture}[scale=0.4]\cell10{$x$}\cell20{$y$} \cell12{$z$}\end{tikzpicture}&
\text{ Type B: }& \begin{tikzpicture}[scale=0.4]\cell20{$z$} \cell12{$x$}\cell22{$y$}\end{tikzpicture}
\end{tabular}
\end{align*}

Coinversion triples consist of type A triples where the entries are increasing in clockwise orientation, plus type B triples where the entries are increasing in counterclockwise orientation. The $\coinv(T)$ statistic is defined as the total number of all such triples. 

Let $\gamma$ be a strong composition, and let $\sigma=\beta(\gamma)$ be the permutation of longest length such that $\sigma \circ \gamma=\inc(\gamma)$. Define $\NAT(\gamma)$ to be the set of non-attacking fillings of $\dg(\inc(\gamma))$ such that the entries of the first row are order-equivalent to $\sigma$ when read in reading order. %

\begin{example}
Let $\alpha=(0,4,0,3,1,0,0,3)$. Then $\inc(\alpha)=(0,0,0,0,1,3,3,4)$,  $\alpha^+=(4,3,1,3)$, and $\beta(\alpha)=(7,6,3,1,5,8,4,2)$. The NAT associated to $\alpha$ are fillings of $\dg(\inc(\alpha^+))$ with the bottom row equal to $(5,8,4,2)$: the last $\ell$ entries of $\beta(\alpha)$, where $\ell=\ell(\alpha^+)=4$. %
Notice that $(5,8,4,2)$, is order-equivalent to $(3,4,2,1)$, and $(3,4,2,1)=\beta(\alpha^+)$. Thus in particular, all tableaux associated to $\alpha$ also belong to $\NAT(\alpha^+)$, such as the one below.
\begin{center}
\begin{tikzpicture}[scale=.5]
\cell13{5}
\cell21{2} \cell22{$7$} \cell23{$5$}  
\cell31{$4$}\cell32{$1$}\cell33{2} 
\cell40{$5$} \cell41{$8$}\cell42{$4$}  \cell43{$2$} 
\node at (6.5,-2) {$\in \NAT((4,3,1,3))$};
\end{tikzpicture} %
\end{center}
\end{example}

By comparing with \cite{Ale16}, we obtain the combinatorial formula for $E_{\inc(\alpha)}^{\beta(\alpha)}(X;q,t)$, where $\alpha$ is a weak composition:
\begin{equation}\label{eq:E}
E_{\inc(\alpha)}^{\beta (\alpha)} (X; q,t) = \sum_{\substack{T\in\NAT(\alpha^+)\\T\ \text{has bottom row } \pi}} %
\wt(T)x^T,
\end{equation}
where $\pi$ is the last $\ell$ entries of $\beta(\alpha)$, for $\ell=\ell(\alpha^+)$.
Here, the weight of a (nonstandard) filling $T$ is
\begin{align}
\wt(T)= q^{\maj(T)}t^{\coinv(T)} \prod_{\substack{u \in \widehat{\dg}(\alpha^+)\\T(u) \neq T(\South(u))}} \frac{(1-t)}{(1-q^{\leg(u)+1}t^{\arm(u)+1})}
\end{align}

\begin{remark}
We have given the tableaux formula for $E_{\mu}^{\sigma}$ where the parts of $\mu$ are weakly increasing. A general formula exists (see \cite{Ale16} for details) for an arbitrary composition $\mu$ and a permutation $\sigma$ by keeping track of the ``basement'' of a filling. Comparing definitions, it follows that for any composition $\alpha$, the basement of a filling of $\dg(\inc(\alpha))$ can be recovered uniquely from the bottom row of the filling.
\end{remark}

\subsection{Standard, packed, and non attacking fillings}

A \emph{packed} filling is one that uses every integer from the set $\{1,\ldots,m\}$ for some $m$. %
For example, the filling \begin{tikzpicture}[scale=0.4]\cell01{$2$} \cell11{$2$}\cell10{$1$}\end{tikzpicture} is packed, but \begin{tikzpicture}[scale=0.4]\cell01{$3$} \cell11{$3$}\cell10{$1$}\end{tikzpicture} is not. However, the latter filling compresses to the former by shifting the alphabet of values in the filling down as necessary: given a set $\{s_1,\ldots,s_k\}$ with $s_1<\cdots<s_k$, the entries $s_i$ become $i$.

It is convenient to work with packed fillings in the context of quasisymmetric functions. We consider every packed filling $T$ to be the representative of the family of fillings which compress to $T$. 
\begin{lemma}
Suppose $T'\in\NAT(\gamma)$ compresses to a packed filling $T\in\NAT(\gamma)$. Then $\coinv(T')=\coinv(T)$ and $\maj(T')=\maj(T)$.
\end{lemma}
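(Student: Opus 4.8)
The plan is to show that compressing a filling preserves both statistics by observing that each statistic depends only on the \emph{relative order} of entries in cells, never on their absolute values, together with the fact that compression is an order-preserving relabeling. First I would make precise the claim about compression: if $T'$ uses the value set $\{s_1 < \cdots < s_k\}$ and we replace each $s_i$ by $i$ to obtain $T$, then for any two cells $u,v$ we have $T'(u) < T'(v) \iff T(u) < T(v)$ and $T'(u) = T'(v) \iff T(u) = T(v)$. This is immediate since the map $s_i \mapsto i$ is a strictly increasing bijection from the value set onto $\{1,\dots,k\}$. I would note that the diagram $\dg(\inc(\gamma))$ and hence the sets $\widehat{\dg}(\gamma)$, the arm and leg lengths, the $\South$ map, and the notions of attacking pairs and type A/B triples are all determined purely by the \emph{shape}, which is unchanged under compression. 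Thus $T$ and $T'$ have the same underlying combinatorial structure, and only the entries differ.

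Next I would handle the two statistics separately. For $\maj$, recall $\Des(T) = \{u \in \widehat{\dg}(\gamma) : T(u) > T(\South(u))\}$ and $\maj(T) = \sum_{u \in \Des(T)} (\leg(u)+1)$. Since $T'(u) > T'(\South(u)) \iff T(u) > T(\South(u))$ by order-preservation, we get $\Des(T') = \Des(T)$ as subsets of cells; and because $\leg(u)+1$ depends only on the shape, the sums agree, so $\maj(T') = \maj(T)$. For $\coinv$, recall that a triple contributes according to whether its three entries are \emph{increasing} in a prescribed (clockwise or counterclockwise) orientation. Whether a triple of entries is increasing in a given cyclic orientation is determined entirely by the relative order of the three values, which order-preservation preserves. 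I would also invoke the remark that, by the non-attacking condition, $z \neq x$ and $z \neq y$ within any triple under consideration, so that the only possible coincidence is $x = y$; order-preservation again respects all equalities and strict inequalities, so each triple is a coinversion triple for $T'$ if and only if it is one for $T$. Hence the two fillings have exactly the same set of coinversion triples, giving $\coinv(T') = \coinv(T)$.

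The argument is essentially routine once the order-preservation observation is isolated, so there is no deep obstacle; the only care needed is bookkeeping. The main subtlety I would flag is verifying that $T' \in \NAT(\gamma)$ forces $T \in \NAT(\gamma)$ and vice versa (which the lemma's hypothesis already assumes, but which relies on the same order-preservation: attacking pairs have distinct entries in $T'$ if and only if they do in $T$, and the bottom row's order-equivalence class to $\beta(\gamma)$ is preserved since compression is order-preserving on the bottom row as well). I would therefore organize the proof as: (1) state the order-preservation property of compression; (2) note shape-dependence of all structural data ($\leg$, $\arm$, $\South$, triples); (3) conclude $\Des(T')=\Des(T)$ and hence $\maj(T')=\maj(T)$; (4) conclude equality of the coinversion-triple sets and hence $\coinv(T')=\coinv(T)$.
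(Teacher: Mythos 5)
Your proof is correct and follows exactly the paper's approach: the paper's entire proof is the one-line observation that ``the relative order of entries is preserved by compression,'' which is precisely the order-preservation property you isolate and then apply to $\Des$, $\maj$, and the coinversion triples. Your write-up simply fills in the routine bookkeeping that the paper leaves implicit.
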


The proof of the above lemma follows from the fact that the relative order of entries is preserved by compression. Moreover,
\[
\sum_{T'} x^{T'} = M_T,
\]
the sum being over all fillings $T'$ that compress to the packed filling $T$, and $M_T$ is the monomial quasisymmetric function corresponding to the content of $T$. Thus the $q,t$-generating function of the family of fillings that compress to the packed representative $T$ is the weight of $T$ times $M_T$. 
Hence, we may work with the finite set of packed fillings to represent all possible fillings.

From \eqref{def: Mac} and \eqref{eq:F}, we thus obtain
\begin{equation}\label{eq:G}
G_{\gamma}(X;q,t) %
= \sum_{\substack{T\in\NAT(\gamma)\\T\ \text{packed}}} q^{\maj(T)}t^{\coinv(T)}M_T \prod_{\substack{u \in \widehat{\dg}(\gamma)\\T(u) \neq T(\South(u))}} \frac{(1-t)}{(1-q^{\leg(u)+1}t^{\arm(u)+1})}.
\end{equation}

\begin{example}
For $\gamma=(1,2)$, all the packed nonattacking fillings in $\NAT(\gamma)$ are shown below with their weights, to obtain
\[
G_{(1,2)} = M_{\{1\}}+\frac{(1-t)(1+t+qt)}{1-qt^2}M_{\{1,1\}}.
\]
\begin{center}
\begin{tikzpicture}[scale=.5]
\node at (-4,0) {$\gamma=(1,2)$:};
\cell01{2}
\cell10{1} \cell11{2}
\node at (0,-2) {$M_{\{1\}}$};
\cell06{3}
\cell15{1} \cell16{2}
\node at (5,-2) {$\frac{qt(1-t)}{1-qt^2}M_{\{1,1\}}$};
\cell0{11}{2}
\cell1{10}{1} \cell1{11}{3}
\node at (10,-2) {$\frac{(1-t)}{1-qt^2}M_{\{1,1\}}$};
\cell0{16}{1}
\cell1{15}{2} \cell1{16}{3}
\node at (15,-2) {$\frac{t(1-t)}{1-qt^2}M_{\{1,1\}}$};
\end{tikzpicture}
\end{center}
\end{example}

\emph{Standard fillings} (or standard tableaux), denoted by $\ST(\gamma)$, are fillings of $\dg(\inc(\gamma))$ such that every element in the set $\{1,\ldots,n\}$ appears exactly once, where $n=|\gamma|$. Thus there is a bijection $\tau:\dg(\inc(\gamma))\rightarrow\{1,\ldots,n\}$ between cells of $\dg(\inc(\gamma))$ and the entries $\{1,\ldots,n\}$, and so we can slightly abuse notation and refer to both a cell and its entry when we work with standard tableaux. %

The standardization map $\std \colon \NAT(\gamma) \rightarrow \ST(\gamma)$ is defined as follows. For $T\in\NAT(\gamma)$, let $\tau=\std(T)$ be the unique standard filling in $\ST(\gamma)$ that preserves the relative order of the original tableau, and where the reading order is used to break ties. In other words, $\tau$ is such that the sequence $T\circ \tau^{-1}$ is weakly increasing, and the restriction of $\tau$ to $T^{-1}(\{x\})$ is increasing with respect to the reading order for all values $x$. It is straightforward to check that if $\tau$ is the standardization of $T$, then $\coinv(T)=\coinv(\tau)$ and $\maj(T)=\maj(\tau)$. See \cref{example:std} below for the standardization $\std(T)$ of $T\in\NAT((1,4,3))$. 

Let $T\in\NAT(\gamma)$ with standardization $\tau=\std(T)$, and $n=|\gamma|$. Define the \emph{reading word} of $T$ to be the sequence of entries of $T$ listed in reading order, denoted by $\rw(T)$. The reading word of $\tau$ is thus a permutation of $\{1,\ldots,n\}$. Define $\ID(\tau)$ to be the \emph{inverse descent set}, where  $i\in \ID(\tau)$ if $i+1$ precedes $i$ in $\rw(\tau)$.
 Comparing definitions, one may check that $\tau$ is a standardization of $T$ if and only if the following holds: the weakly increasing sequence $a=T\circ\tau^{-1}$ satisfies that if $a(i)=a(i+1)$, then $i\not\in \ID(\tau)$. See \cref{example:std} for an explicit computation.
\begin{example}\label{example:std}
We show $T\in\NAT((1,4,3))$ and the corresponding standardization $\tau=\std(T)$.
\begin{center}
\begin{tikzpicture}[scale=0.5]
\node at (-2,-1) {$T=$};
\cell022
\cell121\cell113
\cell223\cell215
\cell322\cell314\cell301
\end{tikzpicture} 
\qquad
\begin{tikzpicture}[scale=0.5]
\node at (-3,-1) {$\tau=\std(T)=$};
\cell023
\cell121\cell115
\cell226\cell218
\cell324\cell317\cell302
\end{tikzpicture} 
\end{center}
We have $a=T\circ\tau^{-1}=(1,1,2,2,3,3,4,5)$, $\rw(\tau)=(3,5,1,8,6,2,7,4)$, and $\ID(\tau)=\{2,4,7\}$. We check that the indices $i$ for which $a(i)=a(i+1)$ are $\{1,3,5\}$, and indeed those are not contained in $\ID(\tau)$.
\end{example}

\begin{comment}
%
%
%
%
Recall that we have defined the reading order to be from top to bottom, \emph{right to left}. For example, the reading word of the tableau $T\in\NAT_0(4,3,1)$ below is $\rw(T)=(2,3,1,2,4,3,1,4,2)$.

\begin{center}
\begin{tikzpicture}[scale=0.5]
\node at (-2,-1) {$T=$};
\cell002
\cell101\cell113
\cell203\cell214\cell222
\cell302\cell314\cell321
\end{tikzpicture} 
\qquad
\begin{tikzpicture}[scale=0.5]
\node at (-3,-1) {$\tau=\std(T)=$};
\cell003
\cell101\cell116
\cell207\cell218\cell224
\cell305\cell319\cell322
\end{tikzpicture} 
\end{center}

We define a standardization map $\std\ :\ \NAT(\gamma) \rightarrow \ST(\gamma)$ as follows. Take the reading word $\rw(T)$, and, reading from left to right, label all duplicate entries with increasing labels. For example, the tableau above has its reading word marked as $(2^1,3^1,1^1,2^2,4^1,3^2,1^2,4^2,2^3)$. Now assign a unique integer from $[1,n]$ to each entry that obeys the marked order, i.e. $\std(a^b)>\std(c^d)$ if and only if $a>c$ or $a=c$ and $b>d$. \OM{Maybe this should be in Section 1 because it's for general $q$. Same with below}
\end{comment}

For $\tau\in\ST(\gamma)$, define %
 $V(\tau)\subseteq [n-1]$ to be the set of entries such that $i\in V(\tau)$ if $i\in\ID(\tau)$ %
or if $i$ and $i+1$ are in cells that attack each other. Define $W(\tau) = \{i\in \tau\ :\ \South(i)=i+1\}$ to be the set of entries $i$ with $i+1$ directly below. Note that $V(\tau) \cap W(\tau) = \emptyset$.

Given a standard filling $\tau$ with $n$ cells, the cells labelled from 1 to $n-1$ are partitioned into three blocks:
\begin{itemize}
\item The cells containing entries in $V(\tau)$, namely those cells where $i\in\ID(\tau)$ %
{\bf OR} $i$ and $i+1$ are in attacking cells.
\item The cells containing entries in $W(\tau)$, namely those cells where $i+1$ is directly below $i$.
\item The rest of the cells with entries in $[n-1]\backslash (V(\tau)\cup W(\tau))$.
\end{itemize}

Let $\gamma \models n$. We consider the pre-image in $\NAT(\gamma)$ of standard fillings $\tau\in\ST(\gamma)$. For a choice of $V(\tau)\subseteq S\subseteq [n-1]$, define a \emph{destandardization map} $\delta_S(\tau)\ :\  \dg(\gamma) \rightarrow \mathbb{Z}$ as follows. Let $\alpha$ be the (strong) composition corresponding to the set $S$. Let $w$ be the word containing the content associated to $\alpha$ in weakly decreasing order, given by $w=(1^{\alpha_1},2^{\alpha_2},\ldots,k^{\alpha_k})$ where $\alpha$ has $k$ parts. Define $\delta_S(\tau):= w\circ \tau$ to be the unique filling of $\dg(\gamma)$ with content $\alpha$ that standardizes to $\tau$. %

\begin{example}
Consider the standard tableau $\tau$ from \cref{example:std}. $\ID(\tau)=\{2,4,7\}$, and the set of indices $i$ such that $i$ and $i+1$ are in cells that attack each other is $\{6\}$, so $V(\tau)=\{2,4,6,7\}$. Thus, $S$ can be any subset of $[7]$ containing $V(\tau)$. We show some examples of $\delta_S$ for various choices of $S$:
\begin{center}
\begin{tikzpicture}[scale=0.5]
\node at (-3,-1) {$\delta_{\left\{\substack{1,2,3,\\4,5,6,7}\right\}}(\tau)=$};
\cell023
\cell121\cell115
\cell226\cell218
\cell324\cell317\cell302
\end{tikzpicture} 
\quad 
\begin{tikzpicture}[scale=0.5]
\node at (-2.9,-1) {$\delta_{\left\{\substack{1,2,4,\\5,6, 7}\right\}}(\tau)=$};
\cell023
\cell121\cell114
\cell225\cell217
\cell323\cell316\cell302
\end{tikzpicture} 
\quad 
\begin{tikzpicture}[scale=0.5]
\node at (-2.8,-1) {$\delta_{\left\{ \substack{2,4, \\ 6, 7}\right\}}  (\tau)=$};
\cell022
\cell121\cell113
\cell223\cell215
\cell322\cell314\cell301
\end{tikzpicture} 
\end{center}

\end{example}

The following Lemmas contain some observations that will be essential to our arguments.

\begin{lemma}\label{lem:std}
Let $\gamma\models n$, and let $\tau\in\ST(\gamma)$. For a set $S\subseteq [n-1]$ such that $V(\tau)\subseteq S$, the following conditions hold: 
\begin{enumerate}[i.]
\item $\delta_S(\tau)\in \NAT(\gamma)$.
\item $\std(\delta_S(\tau))=\tau$.
\item Let $S$ be the unique set such that $T=\delta_S(\tau)$. Then $\wt(T) =  \wt(\delta_{S}(\std(T)))$.
\end{enumerate}
\end{lemma}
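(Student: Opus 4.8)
The plan is to reduce all three parts to one observation about the word $w$ used in the destandardization. Write $\alpha$ for the strong composition associated to $S$ and $w=(1^{\alpha_1},\dots,k^{\alpha_k})$, so that $\delta_S(\tau)=w\circ\tau$ and, setting $c_i:=\tau^{-1}(i)$, the entry of $\delta_S(\tau)$ in the cell $c_i$ is $w(i)$. By the composition--subset correspondence, $w$ is the unique weakly increasing word of content $\alpha$, and $w(i)<w(i+1)$ if and only if $i\in S$; more generally, for $p<p'$ one has $w(p)=w(p')$ exactly when $\{p,\dots,p'-1\}\cap S=\emptyset$. All three parts then follow by comparing $\tau$-entries through this block structure.

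Part (i), and within it the non-attacking condition, is the crux. Suppose for contradiction that two attacking cells $c_p,c_{p'}$ with $p<p'$ get equal entries, i.e.\ $w(p)=w(p')$; then $\{p,\dots,p'-1\}\cap S=\emptyset$, and since $V(\tau)\subseteq S$, for every $i\in\{p,\dots,p'-1\}$ we have $i\notin\ID(\tau)$ (so $c_i$ precedes $c_{i+1}$ in reading order) and $c_i,c_{i+1}$ do not attack. Hence $c_p,c_{p+1},\dots,c_{p'}$ appear in this order in the reading word, so their row indices are weakly decreasing. As $c_p$ and $c_{p'}$ attack, they lie in the same row, or in two adjacent rows with $c_p$ the upper one; in either case the chain occupies only these one or two rows, with columns strictly increasing within each row. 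I would then let $c_j$ be the last cell of the chain in the upper row: comparing columns shows that $c_j$ is either in the same row as $c_{j+1}$ or lies strictly northeast of the lower cell $c_{j+1}$, so $c_j$ and $c_{j+1}$ attack --- contradicting that no consecutive pair attacks. Thus $\delta_S(\tau)$ is non-attacking. The first-row condition is then automatic: the bottom-row cells pairwise attack, hence carry distinct entries, and since $w$ is weakly order-preserving the bottom row of $\delta_S(\tau)$ is order-equivalent to that of $\tau$, i.e.\ to $\beta(\gamma)$, so $\delta_S(\tau)\in\NAT(\gamma)$.

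For (ii) I would invoke the standardization criterion stated before the lemma: $\std(T)=\tau$ iff $a=T\circ\tau^{-1}$ is weakly increasing and $a(i)=a(i+1)$ implies $i\notin\ID(\tau)$. Here $a=w$ is weakly increasing, and $w(i)=w(i+1)$ means $i\notin S$, which forces $i\notin\ID(\tau)$ since $\ID(\tau)\subseteq V(\tau)\subseteq S$; hence $\std(\delta_S(\tau))=\tau$, and compatibility of $\std$ with the statistics yields the matching of $\coinv$ and $\maj$. For (iii), start from $T\in\NAT(\gamma)$ and put $\tau=\std(T)$. Its content is a composition $\alpha$ corresponding to a unique $S\subseteq[n-1]$, and since $T\circ\tau^{-1}$ is the weakly increasing rearrangement of the entries of $T$ it equals the unique weakly increasing word $w$ of content $\alpha$; therefore $T=w\circ\tau=\delta_S(\tau)$, uniquely. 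Moreover $V(\tau)\subseteq S$: if $i,i+1$ lie in attacking cells then non-attackingness of $T$ gives $w(i)\neq w(i+1)$, so $i\in S$, while $\ID(\tau)\subseteq S$ was just shown. Since $\std(T)=\tau$ by (ii), we get $\delta_S(\std(T))=\delta_S(\tau)=T$, so the weight identity $\wt(T)=\wt(\delta_S(\std(T)))$ holds on the nose. The only genuinely delicate point I anticipate is the adjacent-row sub-case of the non-attacking verification in (i), where the reading-order and column bookkeeping must be carried out carefully to produce the attacking consecutive pair.
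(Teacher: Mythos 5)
Your proof is correct, and there is nothing in the paper to compare it against: the authors state this lemma (together with \cref{lem:delta}) as an unproved ``observation,'' so your argument supplies exactly the verification they leave implicit. You correctly isolate the only nontrivial point, the non-attacking condition in part (i), and your chain argument is sound under the paper's conventions: since $V(\tau)\subseteq S$ kills both inverse descents and attacking consecutive pairs along $\{p,\dots,p'-1\}$, the cells $c_p,\dots,c_{p'}$ occur in reading order, hence occupy at most the two rows of the attacking pair $(c_p,c_{p'})$ with columns strictly increasing in each row, and the column comparison $\mathrm{col}(c_j)\geq\mathrm{col}(c_p)>\mathrm{col}(c_{p'})\geq\mathrm{col}(c_{j+1})$ forces the straddling pair $(c_j,c_{j+1})$ to attack. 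Two small presentational points: in the case where $c_p$ and $c_{p'}$ share a row, your ``last cell in the upper row'' device degenerates (there is no $c_{j+1}$), and one should instead just observe that $c_p,c_{p+1}$ lie in the same row and attack, which your earlier case split already implies; and your step identifying the bottom row of $\tau$ with $\beta(\gamma)$ relies on reading $\ST(\gamma)$ as standard fillings whose first row is order-equivalent to $\beta(\gamma)$ --- the paper's literal definition of $\ST(\gamma)$ omits this condition, but it is clearly intended (the lemma is false without it), and your proof makes the right choice. Parts (ii) and (iii) are handled exactly as they should be: (ii) is immediate from the paper's standardization criterion applied to $a=w$, and (iii) reduces, via uniqueness of $S$ (the word $w$ determines $S$), to the tautology $\delta_S(\std(T))=T$.
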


In fact, the destandardization map splits $\NAT(\gamma)$ into disjoint components $\{T\in \NAT(\gamma)\ :\ \std(T)=\tau\}$ indexed by $\tau\in\ST(\gamma)$, due to the following lemma.

\begin{lemma}\label{lem:delta}
Let $\gamma\models n$, let $T\in\NAT(\gamma)$ be a packed filling, and set $\tau=\std(T)$. There exists a unique set $V(\tau)\subseteq S \subseteq [n-1]$ such that $T=\delta_S(\tau)$. 
Furthermore,
\begin{align*}
\sum_{T\ :\ \std(T) =\tau} x^T 
=\sum_{V(\tau) \subseteq S \subseteq [n-1]} \sum_{T=\delta_S(\tau)} M_T
= \sum_{V(\tau) \subseteq S \subseteq [n-1]} M_{S}.
\end{align*} 
\end{lemma}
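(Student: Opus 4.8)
The plan is to first prove the existence-and-uniqueness claim for $S$ and then deduce the generating-function identity from it together with the compression remark recorded just before the lemma. Throughout, I would write $a=T\circ\tau^{-1}$ for the sequence whose $j$-th entry is $T(\tau^{-1}(j))$, i.e. the content of $T$ listed in the order in which $\tau$ labels the cells. For existence and uniqueness, I would start from the observation that, because $\tau=\std(T)$ and $T$ is packed, the sequence $a$ is weakly increasing of the form $a=(1^{c_1},2^{c_2},\ldots,m^{c_m})$, where $(c_1,\ldots,c_m)$ is the content composition $\alpha$ of $T$. Setting $S=\{j\in[n-1]:a(j)<a(j+1)\}$, this is precisely the subset corresponding to $\alpha$, so the destandardization word is $w=(1^{c_1},\ldots,m^{c_m})=a$, and therefore $\delta_S(\tau)=w\circ\tau=(T\circ\tau^{-1})\circ\tau=T$. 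Uniqueness is immediate, since any $S'$ with $\delta_{S'}(\tau)=T$ forces the content $\alpha(S')$ of $\delta_{S'}(\tau)$ to equal $\alpha$, whence $S'=S$. It remains to check $V(\tau)\subseteq S$: if $j\in\ID(\tau)$, then the standardization criterion stated just before \cref{example:std} forbids $a(j)=a(j+1)$, so $a(j)<a(j+1)$ and $j\in S$; if instead $j$ and $j+1$ occupy attacking cells of $\tau$, then since $T\in\NAT(\gamma)$ is non-attacking we have $a(j)=T(\tau^{-1}(j))\neq T(\tau^{-1}(j+1))=a(j+1)$, so again $j\in S$. This proves the first assertion.

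For the identity, I would use that standardization is unchanged by compression: if $T'$ compresses to the packed filling $T_0$, the relative order of entries and the reading order are preserved, so $\std(T')=\std(T_0)$. Hence $\{T\in\NAT(\gamma):\std(T)=\tau\}$ is the disjoint union, over packed fillings $T_0$ with $\std(T_0)=\tau$, of the families compressing to $T_0$, and by the compression remark preceding the lemma each such family contributes $\sum_{T'\to T_0}x^{T'}=M_{T_0}$. Combining with \cref{lem:std}, the map $S\mapsto\delta_S(\tau)$ sends $\{S:V(\tau)\subseteq S\subseteq[n-1]\}$ into the packed fillings standardizing to $\tau$; it is injective because $\delta_S(\tau)$ has content $\alpha(S)$, and surjective by the existence statement just proved, so it is a bijection. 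Since $M_{\delta_S(\tau)}=M_S$, assembling the pieces gives
\begin{align*}
\sum_{\std(T)=\tau}x^T=\sum_{\substack{T_0\text{ packed}\\ \std(T_0)=\tau}}M_{T_0}=\sum_{V(\tau)\subseteq S\subseteq[n-1]}M_{\delta_S(\tau)}=\sum_{V(\tau)\subseteq S\subseteq[n-1]}M_S,
\end{align*}
as required.

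The step I expect to be the genuine content is establishing $V(\tau)\subseteq S$, since it is exactly here that the two defining conditions of $V(\tau)$ — membership in $\ID(\tau)$ and attacking adjacency — must each be shown to force a strict ascent $a(j)<a(j+1)$; the non-attacking hypothesis on $T$ is indispensable for the second condition and the standardization criterion for the first. The remaining bookkeeping — the injectivity-surjectivity of $S\mapsto\delta_S(\tau)$ and the partition of fillings into compression classes — is routine once \cref{lem:std} and the compression remark are in hand.
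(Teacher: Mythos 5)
Your proof is correct. The paper itself states this lemma without proof (it is presented as one of the ``observations that will be essential to our arguments''), and your argument supplies exactly the routine verification the authors leave implicit: constructing $S$ as the set of strict ascents of $a=T\circ\tau^{-1}$ so that $\delta_S(\tau)=T$, deducing $V(\tau)\subseteq S$ from the standardization criterion (for $\ID(\tau)$) and the non-attacking condition (for attacking pairs), getting uniqueness from the content/subset bijection, and then assembling the compression classes via the bijectivity of $S\mapsto\delta_S(\tau)$ to obtain the displayed identity.
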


Thus for a strong composition $\gamma$, the set of packed fillings contained in $\NAT(\gamma)$ is
\[
\bigsqcup_{\tau\in\ST(\gamma)} \bigcup_{V(\tau)\subseteq S\subseteq [n-1]} \delta_S(\tau).
\]

The following lemma gives the weight of a destandardized filling in terms of its standardization.

\begin{lemma}\label{lem:wtq}
Let $\gamma\models n$, $\tau\in\ST(\gamma)$, and $S$ such that $V(\tau)\subseteq S\subseteq [n-1]$. Then 
\begin{align*}
\wt(\delta_S(\tau)) = &\ t^{\coinv(\tau)} q^{\maj(\tau)} \prod_{\substack{u \in \widehat{\dg}(\gamma)\\ u\not\in W(\tau)}} \frac{1-t}{1-q^{\leg(u)+1}t^{\arm(u)+1}}
 \\
& \times \prod_{u \in S\cap W(\tau)} \frac{1-t}{1-q^{\leg(u)+1}t^{\arm(u)+1}}.
\end{align*}
\end{lemma}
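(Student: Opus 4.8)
The plan is to peel the statistics off the weight one at a time and reduce everything to identifying which cells $u\in\widehat{\dg}(\gamma)$ satisfy $\delta_S(\tau)(u)\neq\delta_S(\tau)(\South(u))$, since these are exactly the cells indexing the product in the definition of $\wt$. First, write $T=\delta_S(\tau)=w\circ\tau$, where $w=(1^{\alpha_1},\dots,k^{\alpha_k})$ and $\alpha$ is the composition corresponding to $S$. By \cref{lem:std}(ii) we have $\std(T)=\tau$, and since standardization preserves both $\maj$ and $\coinv$, the prefactor $q^{\maj(T)}t^{\coinv(T)}$ equals $q^{\maj(\tau)}t^{\coinv(\tau)}$, matching the two powers in the claim. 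It then remains to show that the cell set $\{u\in\widehat{\dg}(\gamma):T(u)\neq T(\South(u))\}$ equals the \emph{disjoint} union $\{u\in\widehat{\dg}(\gamma):u\notin W(\tau)\}\cup(S\cap W(\tau))$, because the two displayed products assemble exactly the product in $\wt(T)$ once their index sets are identified.

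The key elementary fact is that $w$ is weakly increasing with $w(j)<w(j+1)$ precisely when $j\in S$ (the partial sums of $\alpha$), so for any cell $u$ with $i=\tau(u)$ and $i'=\tau(\South(u))$ we have $T(u)\neq T(\South(u))$ if and only if $w(i)\neq w(i')$. For $u\in W(\tau)$ we have $i'=i+1$ by the definition of $W(\tau)$, so $w(i)\neq w(i+1)$ exactly when $i\in S$; thus a $W$-cell contributes precisely when $u\in S\cap W(\tau)$, producing the second product on the right-hand side.

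The crux is to show that \emph{every} cell $u\in\widehat{\dg}(\gamma)$ with $u\notin W(\tau)$ contributes, i.e.\ that $w(i)\neq w(i')$ always holds. I would argue by contradiction: suppose $w(i)=w(i')=m$. By monotonicity of $w$, every label strictly between $i$ and $i'$ also lies in the value-$m$ block, so the cells $\tau^{-1}(i),\tau^{-1}(i+1),\dots,\tau^{-1}(i')$ all carry the entry $m$ in $T$; moreover, since $\std$ breaks ties among equal entries by reading order, these value-$m$ cells are visited in increasing label order in the reading word. Now $u$ sits one row above $\South(u)$, hence is read strictly earlier. If $i>i'$ this immediately contradicts that $\tau^{-1}(i)=u$ would be read after $\tau^{-1}(i')=\South(u)$. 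If instead $i<i'$, then $i'\geq i+2$ (as $u\notin W(\tau)$ forbids $i'=i+1$), so $\tau^{-1}(i+1)$ is a value-$m$ cell read strictly between $u$ and $\South(u)$; tracing the reading order, it must lie either in the row of $u$ to its right or in the row of $\South(u)$ to its left, and in either case it shares a row with, hence attacks, a value-$m$ cell, contradicting $T\in\NAT(\gamma)$ from \cref{lem:std}(i). This non-attacking contradiction in the ascent case is the main obstacle, and it is the place where the hypothesis $V(\tau)\subseteq S$ (which guarantees $\delta_S(\tau)$ is non-attacking) is genuinely used.

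Combining the three pieces, the index set of the product in $\wt(T)$ is exactly the disjoint union described above; since each factor is the same $\tfrac{1-t}{1-q^{\leg(u)+1}t^{\arm(u)+1}}$, the product splits as the product over $\{u\in\widehat{\dg}(\gamma):u\notin W(\tau)\}$ times the product over $S\cap W(\tau)$, which together with the matched powers of $q$ and $t$ yields the claimed formula.
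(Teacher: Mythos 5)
Your proof is correct and follows essentially the same strategy as the paper's: use preservation of $\coinv$ and $\maj$ under standardization for the prefactor, and identify the set of cells $u$ with $\delta_S(\tau)(u)\neq\delta_S(\tau)(\South(u))$ as the complement of $W(\tau)\setminus S$ in $\widehat{\dg}(\gamma)$. In fact, your non-attacking contradiction argument (ruling out $w(i)=w(i')$ for vertical pairs with non-consecutive labels) rigorously justifies the step the paper dispatches with only the assertion that ``all such cells must correspond to cells of $W(\tau)$,'' so your write-up is a more complete version of the same proof.
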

\begin{proof}
We are given $V(\tau)\subseteq S\subseteq [n-1]$, and let $T=\delta_S(\tau)$. Recall that $W(\tau)$ contains the entries $\{i\in\tau: \South(i)=i+1\}$.  Ensuring the products are correct is a matter of tracking the cells $\{u\in\widehat{\dg}(T) : T(\South(u))=T(u)\}$.  All such cells must correspond to cells of $W(\tau)$.  %
In particular, by our construction, $\{u\in\widehat{\dg}(T):T(u)=T(\South(u))\} = W(\tau)\backslash S$.  All other cells contribute a factor of $\frac{1-t}{1-q^{\leg(u)}t^{\arm(u)}}$.
Since the coinversions and major index are preserved by standardization, the lemma follows.
\end{proof}

For a strong composition $\gamma \models n$ where $\ell(\gamma)$ is the number of parts, define $h(\gamma)=n-\ell(\gamma)$ to be the number of cells in $\dg(\gamma)$ without its bottom row. Note that $h(\gamma)$ is the number of cells in $\widehat{\dg}(\gamma)$. 

\section{Proof of \cref{thm:q}}\label{sec:packed}

We will start with a proof for the $q=0$ specialization of \cref{thm:q} to develop the main ideas of the proof. The proof for the general $q$ case follows the same strategy.

\subsection{The $q=0$ case}\label{sec:q0}

We assume $q=0$ throughout this section. Let $\gamma$ be a strong composition. When we compute compute $G_{\gamma}(X;0,t)$, the only surviving tableaux in \eqref{eq:G} are those with an empty descent set, which means the entries must be non-increasing as we read the columns from bottom to top. We denote the subsets of $\NAT(\gamma)$ and $\ST(\gamma)$ that have nonzero weight at $q=0$ by $\NAT_0(\gamma)$ and $\ST_0(\gamma)$, respectively.  We will prove the following.
\begin{equation}\label{eq:HL}
G_{\gamma}(X;0,t) = \sum_{\tau\in\ST_0(\gamma)} t^{\coinv(\tau)}(1-t)^{h(\gamma)-|W(\tau)|} \sum_{U\subseteq W(\tau)} (-t)^{|U|} F_{V(\tau)\cup U}.
\end{equation}

Observe the following. The denominator in the product of \eqref{eq:G} vanishes, and the weight of each $T\in\NAT_0(\gamma)$ becomes
\[
\wt(T)=t^{\coinv(T)}(1-t)^{|\{u \in \widehat{\dg}(\lambda)\ :\ T(\South(u))\neq T(u)\}|}.
\]
Moreover, for $\tau\in\ST_0(\gamma)$, \cref{lem:wtq} specializes to
\begin{equation}\label{eq:wt}
\wt(\delta_S(\tau))=t^{\coinv(\tau)}(1-t)^{h(\gamma)-|W(\tau)\backslash S|}.
\end{equation}

We are now ready to prove \cref{thm:q} at $q=0$. %

\begin{proof}[Proof of \eqref{eq:HL}]
From the definition, we have
\begin{align}
G_{\gamma}(X;0,t) &= \sum_{T\in\NAT_0(\gamma)} \wt(T)x^T \nonumber\\
&=\sum_{\tau\in\ST_0(\gamma)} \sum_{V(\tau)\subseteq S \subseteq [n-1]} \wt(\delta_S(\tau))M_{S}\nonumber\\
&=\sum_{\tau\in\ST_0(\gamma)} \sum_{V(\tau)\subseteq S \subseteq [n-1]} t^{\coinv(\tau)}(1-t)^{h(\gamma)-|W(\tau)\backslash S|} M_{S}\nonumber\\
&=\sum_{\tau\in\ST_0(\gamma)}   t^{\coinv(\tau)}(1-t)^{h(\gamma)-|W(\tau)|}\sum_{V(\tau)\subseteq S \subseteq [n-1]}  
(1-t)^{|S\cap W(\tau)|}M_{S} \label{eq:G2}
\end{align}
where the second line is by \cref{lem:std,lem:delta}, the third line is by \eqref{eq:wt}.%

To complete the proof, we need to reformulate the second summation.
\begin{align*}\sum_{V(\tau)\subseteq S \subseteq [n-1]}  
(1-t)^{|S\cap W(\tau)|}M_{S} 
&=\sum_{W\subseteq W(\tau)} (1-t)^{|W|} \sum_{\substack{V(\tau)\subseteq S \subseteq [n-1]\\S\cap W(\tau)=W}} M_{S}
\end{align*}
By the binomial theorem,
\begin{equation}\label{eq:W}
(1-t)^{|W|} = \sum_{U \subseteq W} (-t)^{|U|}.
\end{equation}
Plugging in gives
\begin{align*}
\sum_{W\subseteq W(\tau)}\sum_{U \subseteq W} (-t)^{|U|} \sum_{\substack{V(\tau)\subseteq S \subseteq [n-1]\\S\cap W(\tau)=W}} M_{S}
&=\sum_{U \subseteq W(\tau)} (-t)^{|U|} \sum_{W\supseteq U}\sum_{\substack{V(\tau)\subseteq S \subseteq [n-1]\\S\cap W(\tau)=W}} M_{S}\\
&=\sum_{U \subseteq W(\tau)} (-t)^{|U|} \sum_{V(\tau)\cup U\subseteq S \subseteq [n-1]} M_{S}\\
&=\sum_{U \subseteq W(\tau)} (-t)^{|U|} F_{V(\tau)\cup U},
\end{align*}
which completes the proof.
\end{proof}

\subsection{The general $q$ case}
Let $\gamma$ be a strong composition of $n$. Recall that the weight of a tableau $T\in\NAT(\gamma)$ is given by:
\[
\wt(T) = t^{\coinv(T)} q^{\maj(T)} \prod_{\substack{u \in \widehat{\dg}(\gamma)\\T(\South(u))\neq T(u)}} \frac{1-t}{1-q^{\leg(u)+1}t^{\arm(u)+1}}
\]
As in the $q=0$ case, we use the destandardization map $\delta_S$ to split the set $\NAT(\gamma)$ into disjoint components indexed by their representative standard fillings in $\ST(\gamma)$. For $\tau\in\ST(\gamma)$ and any $V(\tau)\subseteq S \subseteq [n-1]$, again, the only cells that will potentially change the weight of the destandardized tableau $\delta_S(\tau)$ are the ones in $W(\tau)$. This is because when an entry $i\in \tau$ has $i+1$ above it, it is possible for that pair to destandardize to the same value if $\delta_S(\tau)\circ \tau^{-1}(i)=\delta_S(\tau)\circ \tau^{-1}(i+1)$, changing the product within the weight function.

We require the following lemma.

\begin{lemma}
Let $W$ be any subset of the cells of $\dg(\lambda)$. Then
\[
\prod_{u \in W} \frac{1-t}{1-q^{\leg(u)+1}t^{\arm(u)+1}} =\sum_{U \subseteq W} (-t)^{|U|} \left( \prod_{u \in U} \frac{1-q^{\leg(u)+1}t^{\arm(u)}}{1-q^{\leg(u)+1}t^{\arm(u)+1}} \right).
\]
\label{bino}
\end{lemma}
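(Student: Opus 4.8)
The identity has the flavor of a factorization-into-factors statement proved by expanding the left side: the product $\prod_{u\in W}\frac{1-t}{1-q^{\leg(u)+1}t^{\arm(u)+1}}$ runs over the cells $u\in W$, and I want to show it equals the sum over subsets $U\subseteq W$ with the indicated signed factors. The plan is to prove this by a purely algebraic manipulation at the level of each individual cell, using the elementary factorization
\begin{equation*}
\frac{1-t}{1-q^{\leg(u)+1}t^{\arm(u)+1}} = 1 - t\cdot\frac{1-q^{\leg(u)+1}t^{\arm(u)}}{1-q^{\leg(u)+1}t^{\arm(u)+1}}.
\end{equation*}
Once this per-cell identity is established, the full statement follows by expanding the product over all $u\in W$ and collecting terms.

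First I would verify the per-cell identity. Abbreviate, for a fixed cell $u$, $L=\leg(u)+1$ and $A=\arm(u)$, so the claim is $\frac{1-t}{1-q^Lt^{A+1}} = 1 - t\,\frac{1-q^Lt^{A}}{1-q^Lt^{A+1}}$. Clearing the common denominator $1-q^Lt^{A+1}$, the right side becomes $\frac{(1-q^Lt^{A+1}) - t(1-q^Lt^{A})}{1-q^Lt^{A+1}} = \frac{1-q^Lt^{A+1}-t+q^Lt^{A+1}}{1-q^Lt^{A+1}} = \frac{1-t}{1-q^Lt^{A+1}}$, which matches the left side after the two $q^Lt^{A+1}$ terms cancel. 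This is a one-line check.

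Next I would take the product of this identity over all cells $u\in W$:
\begin{equation*}
\prod_{u\in W}\frac{1-t}{1-q^{\leg(u)+1}t^{\arm(u)+1}} = \prod_{u\in W}\left(1 - t\,\frac{1-q^{\leg(u)+1}t^{\arm(u)}}{1-q^{\leg(u)+1}t^{\arm(u)+1}}\right).
\end{equation*}
Expanding the product on the right by choosing, for each $u$, either the $1$ or the term $-t\,\frac{1-q^{\leg(u)+1}t^{\arm(u)}}{1-q^{\leg(u)+1}t^{\arm(u)+1}}$, the subsets $U\subseteq W$ index exactly the resulting monomials: $U$ records the cells for which the second option is chosen. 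Each such choice contributes the factor $\prod_{u\in U}\left(-t\,\frac{1-q^{\leg(u)+1}t^{\arm(u)}}{1-q^{\leg(u)+1}t^{\arm(u)+1}}\right) = (-t)^{|U|}\prod_{u\in U}\frac{1-q^{\leg(u)+1}t^{\arm(u)}}{1-q^{\leg(u)+1}t^{\arm(u)+1}}$, and summing over all $U\subseteq W$ gives the right-hand side of the lemma.

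I do not expect a genuine obstacle here: the statement is an algebraic identity of rational functions, and the entire content is the single per-cell factorization, which is verified by clearing denominators. The only point requiring minor care is bookkeeping in the product expansion — making sure the sign $(-t)^{|U|}$ and the grouping of the rational factors into $\prod_{u\in U}$ are handled cleanly — but this is the standard ``expand a product of binomials and index the terms by subsets'' argument, identical in spirit to the use of the binomial theorem in equation~\eqref{eq:W} in the $q=0$ case.
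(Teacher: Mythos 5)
Your proof is correct and is essentially identical to the paper's: the paper also rewrites each factor as $1+\frac{-t\left(1-q^{\leg(u)+1}t^{\arm(u)}\right)}{1-q^{\leg(u)+1}t^{\arm(u)+1}}$ and then expands the product over $W$ via the subset-indexed binomial expansion $\prod_{s\in S}(1+f(s))=\sum_{U\subseteq S}\prod_{u\in U}f(u)$. Your verification of the per-cell identity by clearing denominators and your bookkeeping of the $(-t)^{|U|}$ factor match the paper's argument step for step.
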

\begin{proof}
First we note that
\[
\frac{1-t}{1-q^{\leg(u)+1}t^{\arm(u)+1}} = 1+ \frac{-t+q^{\leg(u)+1}t^{\arm(u)+1}}{1-q^{\leg(u)+1}t^{\arm(u)+1}} = 1+ \frac{-t(1-q^{\leg(u)+1}t^{\arm(u)})}{1-q^{\leg(u)+1}t^{\arm(u)+1}}
\] 
Then we apply the binomial theorem. 
Given a set $S$ and a function $f:S\rightarrow \mathbb{Q}(q,t)$, the binomial theorem states
\[
\prod_{s\in S}(1+f(s))=\sum_{U\subseteq S}\left(\prod_{u\in U}f(u)\right).
\]
We apply this with $f(u)=\frac{-t\left(1-q^{\leg(u)+1}t^{\arm(u)}\right)}{1-q^{\leg(u)+1}t^{\arm(u)+1}}$.
\end{proof}

We can now prove the main result.
\begin{proof}[Proof of \cref{main}]
By \cref{lem:std,lem:delta} we have
\begin{align*}
G_{\gamma}(X;q,t) &= \sum_{T\in\NAT(\gamma)} \wt(T)x^T\\
&=\sum_{\tau\in\ST(\gamma)}\sum_{V(\tau)\subseteq S \subseteq [n-1]}  \wt(\delta_S(\tau)) M_{S}.\\
\end{align*}

\noindent
Applying \cref{lem:wtq},
\begin{align*}
G_{\gamma}(X;q,t) =& \sum_{\tau\in\ST(\gamma)} t^{\coinv(\tau)}q^{\maj(\tau)}\left( \prod_{\substack{u \in \widehat{\dg}(\gamma)\\u\not\in W(\tau)}} \frac{1-t}{1-q^{\leg(u)+1}t^{\arm(u)+1}} \right)\\
&
\times \sum_{V(\tau)\subseteq S \subseteq [n-1]} \left( \prod_{u \in S\cap W(\tau)} \frac{1-t}{1-q^{\leg(u)+1}t^{\arm(u)+1}} \right) M_{S}\\
=&\sum_{\tau\in\ST(\gamma)} t^{\coinv(\tau)}q^{\maj(\tau)}\left( \prod_{\substack{u \in \widehat{\dg}(\gamma)\\u\not\in W(\tau)}} \frac{1-t}{1-q^{\leg(u)+1}t^{\arm(u)+1}} \right)\\
&\times \sum_{W\subseteq W(\tau)}\left( \left( \prod_{u \in W} \frac{1-t}{1-q^{\leg(u)+1}t^{\arm(u)+1}} \right)
\sum_{\substack{V(\tau)\subseteq S \subseteq [n-1]\\S\cap W(\tau)=W}} M_{S}\right).
\end{align*}

\noindent
By \cref{bino},
\begin{align*}
&\sum_{W\subseteq W(\tau)}\left( \left( \prod_{u \in W} \frac{1-t}{1-q^{\leg(u)+1}t^{\arm(u)+1}} \right)
\sum_{\substack{V(\tau)\subseteq S \subseteq [n-1]\\S\cap W(\tau)=W}} M_{S}\right)\\
&=\sum_{W\subseteq W(\tau)}\left(\sum_{U \subseteq W} (-t)^{|U|} \left( \prod_{u \in U} \frac{1-q^{\leg(u)+1}t^{\arm(u)}}{1-q^{\leg(u)+1}t^{\arm(u)+1}} \right)\sum_{\substack{V(\tau)\subseteq S \subseteq [n-1]\\S\cap W(\tau)=W}} M_{S}\right).
\end{align*}
Next, we reorder the summations, as
\begin{align*}
&=\sum_{U \subseteq W(\tau)} \left((-t)^{|U|} \left( \prod_{u \in U} \frac{1-q^{\leg(u)+1}t^{\arm(u)}}{1-q^{\leg(u)+1}t^{\arm(u)+1}} \right)\sum_{\substack {W\subseteq W(\tau)\\ W\supseteq U}}\sum_{\substack{V(\tau)\subseteq S \subseteq [n-1]\\S\cap W(\tau)=W}} M_{S}\right).
\end{align*}
Finally, we gather terms and apply \eqref{eq:F},
\begin{align*}
&=\sum_{U \subseteq W(\tau)} \left((-t)^{|U|} \left( \prod_{u \in U} \frac{1-q^{\leg(u)+1}t^{\arm(u)}}{1-q^{\leg(u)+1}t^{\arm(u)+1}} \right)\sum_{V(\tau)\cup U \subseteq S \subseteq [n-1]} M_{S}\right)\\
&=\sum_{U \subseteq W(\tau)} (-t)^{|U|} \left( \prod_{u \in U} \frac{1-q^{\leg(u)+1}t^{\arm(u)}}{1-q^{\leg(u)+1}t^{\arm(u)+1}} \right)F_{V(\tau)\cup U}.
\end{align*}
The theorem follows.
\end{proof}

\section{Further simplifications and specializations}\label{sec:HL}

In this section, we further simplify the result for the Hall--Littlewood case ($q=0$) from \cref{sec:q0}. First, we introduce some notation. Let 
\begin{align*}
\ST_{1}(\gamma)= \{\tau \in \ST(\gamma)\colon i \in \Des(\tau) \implies \South(i) = i-1\}.
\end{align*}
That is, reading down columns, values can decrease by at most 1 per cell. 
We may send any element of $\tau' \in \ST_1(\gamma)$ to an element of $\tau \in \ST_0(\gamma)$ by sorting entries within their columns to become weakly decreasing from bottom to top. In this case the cells containing descents are sent to some $U\subset W(\tau)$, though the values in the cells may change. To make this sorting function invertible, we need to keep track of $U$. For any $U\subseteq W(\tau)$, consider the map $\iota_U$ that sends  $\tau \in \ST_0(\gamma)$ to $\tau' \in \ST_{1}(\gamma)$  by reversing the order of certain consecutive values in columns of $\tau$. 
Specifically, for each maximal set $\{i, i+1, \ldots, i+k-1\} \in U$, the values in $[i, i+k]$ are reversed so that $\{i+1, i, \ldots, i+k\}$ is similarly maximal in $\Des(\tau')$.
All other values are fixed. Note that since the cells of standard filling $\tau$ are identified with the values they contain, we represent $U$ by a subset of $[n]$.
Further, since $\tau\in\ST_0(\gamma)$ is the result of sorting the entries within the columns of $\tau'$, $\tau'$ has a unique preimage.

Next, for $\tau' \in \ST_{1}(\gamma)$,  let  
\begin{align*}
&\coinv(\Des(\tau')):= \sum_{u\in \Des(\tau')}\arm(u), \\
&\omega(\tau') := h(\gamma) - |\{i \in \tau' \colon i \text{ and } i+1 \text{ share a column}\}|.
\end{align*}

Lastly, we need to replace $V(\tau')$ with a new set in the context of $\ST_1(\gamma)$. The \emph{descent group of $i$} is the maximal connected set of cells in the column of $i$ such that every cell is a decent except the bottom cell. By construction, every cell is contained in a unique descent group. We say \emph{$i$ attacks $i+1$ through descents} if a cell in the descent group of $i$ attacks a cell in the descent group of $i+1$. Notice that if $i$ attacks $i+1$ in $\tau'\in \ST_1(\gamma)$, then $i$ must be at the top of its descent group and $i+1$ must be at the bottom of its descent group. Define $\Nu(\tau')$ as the set of $i$ in $\tau'$ such that $i\in\ID(\tau')$
or $i$ attacks $i+1$ through descents. Since $i$ attacking $i+1$ implies $i$ attacks $i+1$ by descents, it follows that $V(\tau')\subseteq  \Nu(\tau')$.

\begin{example}\label{example: HLstats}
Consider $\tau \in \ST_0((1,4,3))$ and $\tau' = \iota_{\{3, 4\}}(\tau) \in \ST_1((1,4,3))$.
\begin{center}
\begin{tikzpicture}[scale=0.5]
\node at (-2,-1) {$\tau=$} ;
\cell023
\cell124\cell111
\cell225\cell212
\cell327\cell318\cell306
\end{tikzpicture} 
\qquad \qquad
\begin{tikzpicture}[scale=0.5]
\node at (-2,-1) {$\tau'=$};
\cell025
\cell124\cell111
\cell223\cell212
\cell327\cell318\cell306
\end{tikzpicture} 
\end{center}
Here, $\coinv(\Des(\tau')) = 2$, $\omega(\tau') = 2$, $\ID(\tau')=\{3,4,7\}$, and $\Nu(\tau')=\{2, 3, 4, 5, 6, 7\}$.  
\end{example}

\begin{theorem*}[\cref{thm:HL}]
Let $\gamma$ be a strong composition. Then
\begin{align*}
G_{\gamma}(X;0,t)& =
\sum_{\tau\in\ST_{1}(\gamma)}
(1-t)^{\omega(\tau)}
(-t)^{|\Des(\tau)|}
t^{\coinv(\tau)-\coinv(\Des(\tau))} 
 F_{\Nu(\tau)}.
\end{align*}
\end{theorem*}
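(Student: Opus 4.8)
The plan is to prove \cref{thm:HL} by re-indexing the sum in \eqref{eq:HL} through the map $\iota$ introduced just before the statement. Concretely, I would show that $(\tau,U)\mapsto \tau':=\iota_U(\tau)$ is a bijection from the set of pairs $\{(\tau,U)\colon \tau\in\ST_0(\gamma),\ U\subseteq W(\tau)\}$ onto $\ST_1(\gamma)$, with inverse obtained by sorting the entries of each column of $\tau'$ into decreasing (bottom-to-top) order and recording as $U$ the cells that carried descents. That this is well defined and invertible is essentially the content of the construction preceding the theorem: reversing each maximal run of $U$ turns a strictly column-increasing (top-to-bottom) segment into a strictly decreasing one, producing precisely the descents of an $\ST_1$-tableau, and sorting undoes this uniquely. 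I would first verify that $\iota_U(\tau)$ really lands in $\ST(\gamma)$: non-attacking is automatic since standard fillings have distinct entries, and the bottom row stays order-equivalent to $\beta(\gamma)$ because each reversed run consists of consecutive integers, so exchanging which member of the run occupies the bottom row leaves the relative order of that row unchanged.

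Next I would match the four factors under this bijection. The equality $|U|=|\Des(\tau')|$ is immediate, since a maximal run of length $k$ in $U$ creates exactly $k$ descents. The exponent of $(1-t)$ matches because reversal permutes entries only within columns, so the multiset of values in each column — and hence the set $\{i\colon i,i+1\text{ share a column}\}$ — is unchanged; as $\tau\in\ST_0(\gamma)$ is strictly column-increasing top-to-bottom, this set equals $W(\tau)$, giving $h(\gamma)-|W(\tau)|=\omega(\tau')$. For the fundamental index I would prove $V(\tau)\cup U=\Nu(\tau')$ by two inclusions: each $i\in U$ has $i+1$ directly above $i$ in $\tau'$, so $i+1$ precedes $i$ in reading order and $i\in\ID(\tau')\subseteq\Nu(\tau')$; and the inverse descents and attacking pairs recorded by $V(\tau)$ persist in $\tau'$ either as inverse descents or as pairs attacking through descents, using that the descent groups of $\tau'$ are exactly the reversed runs.

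The crux — and the step I expect to be the main obstacle — is the coinversion identity $\coinv(\tau')=\coinv(\tau)+\coinv(\Des(\tau'))$, equivalently $\coinv(\tau')-\coinv(\Des(\tau'))=\coinv(\tau)$, where $\coinv(\Des(\tau'))=\sum_{u\in\Des(\tau')}\arm(u)$. I would establish this by analyzing how coinversion triples change when a single maximal run is reversed, reducing the general statement to one run at a time. The key local fact is that a run consists of consecutive integers, so for any descent cell $u\in\Des(\tau')$, whose entry exceeds that of $\South(u)$ by exactly $1$, no cell $z$ in the arm of $u$ can carry an entry strictly between these two consecutive values; a short orientation check on type~A and type~B triples then shows that all $\arm(u)$ of these triples are coinversions in $\tau'$, accounting for exactly $\coinv(\Des(\tau'))$. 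The delicate part is verifying that reversal produces no other net change — that the triples not anchored at a descent cell contribute the same total to $\coinv(\tau')$ as the corresponding triples of $\tau$ — which is where the bookkeeping is hardest, because a run reversal alters every entry of the run at once rather than a single transposition.

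Finally, assembling the four matchings and substituting into \eqref{eq:HL} converts $\sum_{\tau\in\ST_0(\gamma)}\sum_{U\subseteq W(\tau)}$ into $\sum_{\tau'\in\ST_1(\gamma)}$ carrying weight $(1-t)^{\omega(\tau')}(-t)^{|\Des(\tau')|}t^{\coinv(\tau')-\coinv(\Des(\tau'))}F_{\Nu(\tau')}$, which is exactly \cref{thm:HL}.
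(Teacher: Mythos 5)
Your proposal is correct and follows essentially the same route as the paper's proof: re-index \eqref{eq:HL} by the map $(\tau,U)\mapsto\iota_U(\tau)$ onto $\ST_1(\gamma)$ (the paper phrases this as reversing the order of summation and then applying $\iota_U$), and then track how $\iota_U$ transforms the exponent of $(1-t)$, the exponent of $(-t)$, the power of $t$, and the index of the fundamental function. One remark: your coinversion analysis is actually more precise than the paper's own wording --- the paper credits only type B triples anchored at cells of $U$ with flipping, but, as you observe, both type A and type B triples anchored at descent cells flip from non-coinversion to coinversion (this is exactly what is needed to account for the full $\coinv(\Des(\tau'))=\sum_{u\in\Des(\tau')}\arm(u)$), and your remaining ``delicate part'' is settled by the one-line observation that every entry outside a reversed run lies outside that run's interval of consecutive values, so every relative order not internal to a descent pair, and hence every other triple, is unchanged.
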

\begin{proof}
The proof is a matter of changing the order of summations in \eqref{eq:HL}, applying $\iota_U$, tracking the changes to statistics, and combining the sums. Reversing the order of summations gives

\begin{align}\label{eq: reversed sum}
G_{\gamma}(X;0,t) 
= \sum_{U \subseteq \widehat{\dg}(\gamma)}
\sum_{\tau\in\ST_0(\gamma)  \atop W(\tau)\supseteq U}  
(1-t)^{h(\gamma)-|W(\tau)|}
(-t)^{|U|}
t^{\coinv(\tau)} 
F_{V(\tau)\cup U}.
\end{align}

Next, apply $\iota_U$ to each $\tau$. The image is in $\ST_{1}(\gamma)$. Further, each tableau of  $\ST_1(\gamma)$ will appear exactly once. We now work through the changes to the factors in (\ref{eq: reversed sum}) from left to right.

Consider the image of $W(\tau)$ under $\iota_U$. Because tableaux in $\ST_0(\gamma)$ have no descents, $W(\tau)$ is precisely the set containing those $i$ that share a column with $i+1$. Because $\iota_U$ permutes entries within columns, $h(\gamma) - |W(\tau)|$ is taken to $\omega(\tau)$.

By the definition of $\iota_U$, $|U|$ is taken to $|\Des(\tau)|$. Thus, $(-t)^{|U|}$ is taken to $(-t)^{|\Des(\tau)|}$.

Notice that every type $B$ triple with its upper right cell contained in $U$ is changed from an inversion to a coinversion. We may compensate by multiplying by 
$t^{-\coinv(U)}$. Because the relative order of entries within all other triples is preserved, no other coinversions are changed. 

Finally, $\Nu(\iota_U(\tau)) = V(\tau)\cup U$. To see this, first notice that each $i \in U$ is moved below $i+1$, and so is moved after $i+1$ in the reading word. Hence, each element of $U$ is taken to an element of $\Nu(\iota_U(\tau))$. The only other values that need to be considered are those $i$ that are in a different column than $i+1$. In this case, $i$ can only move up relative to $i+1$, and $i+1$ can only move down relative to $i$. It is thus possible that $\iota_U$ removes $i$ from $V(\tau)$, but it cannot add any values to $V(\tau)$. By construction of $\iota_U$, if $i$ and $i+1$ were attacking in $\tau$ or $i\in\ID(\tau)$, 
then $i$ and $i+1$ are attacking through descents in $\iota_U(\tau)$. We thus replace $F_{V(\tau)\cup U}$ with $F_{\Nu(\tau)}$. Tracking these changes gives

\begin{align*}
G_{\gamma}(X;0,t) 
= \sum_{U \subseteq \widehat{\dg}(\gamma)}
\sum_{\tau\in\ST_{1}(\gamma)  \atop \Des(\tau) = U} 
(1-t)^{\omega(\tau)}
(-t)^{|\Des(\tau)|} 
t^{\coinv(\tau)-\coinv(\Des(\tau))} 
F_{\Nu(\tau)}.
\end{align*}
Combining the two sums completes the proof.
\end{proof}

\subsection{Jack specialization}
We also consider the specialization of $G_{\gamma}(X;q,t)$ to the setting of Jack polynomials, from which we immediately get a new definition of a \emph{quasisymmetric Jack polynomial}. Recall that the Jack polynomial indexed by a partition $\lambda$ with parameter $\alpha$ is a symmetric polynomial that can be obtained from 
\[
J_{\lambda}(X;\alpha)=\lim_{t\rightarrow1^{-}}
\left( \prod_{u\in\dg(\lambda)} \frac{1-t^{\arm(u)+1}q^{\leg(u)}}{1-t} \right)
P_{\lambda}(X;t^{\alpha},t).
\]

\noindent
Thus we define the quasisymmetric Jack polynomial indexed by a strong composition $\gamma$ as
\begin{equation}
G_{\gamma}(X;\alpha)=\lim_{t\rightarrow1^{-}} 
\left( \prod_{u\in\dg(\lambda)} \frac{1-t^{\arm(u)+1}q^{\leg(u)}}{1-t} \right)
G_{\gamma}(X;t^{\alpha},t).
\end{equation}

\noindent
Using \eqref{eq:G}, we get
\begin{equation}\label{eq:jack}
G_{\gamma}(X;\alpha)=\sum_{\sum_{\substack{T\in\NAT(\gamma)\\T\ \text{packed}}} } \left(\prod_{\substack{u \in \widehat{\dg}(\lambda)\\T(u)=T(\South(u))}} (\alpha(\leg(u)+1)+\arm(u)+1) \right)M_T. 
\end{equation}

\noindent
This is a  refinement of the Knop--Sahi formula \cite[Theorem 5.1]{KS96},
\[
J_{\lambda}(X;\alpha)=\sum_{\substack{T\in\NAT(\lambda)}}  \left(\prod_{\substack{u \in \widehat{\dg}(\lambda)\\T(u)=T(\South(u))}} (\alpha(\leg(u)+1)+\arm(u)+1) \right) x^T. 
\]

\noindent
Using \cref{thm:q} we obtain the following corollary.
\begin{corollary}
The quasisymmetric Jack polynomial has the following fundamental expansion:
\begin{align*}
G_{\gamma}(X;\alpha)=&\sum_{\tau\in\ST(\gamma)}  \left( \prod_{u\in W(\tau)} (\alpha(\leg(u)+1)+\arm(u)+1) \right) \\
&\times \sum_{U \subseteq W(\tau)} (-1)^{|U|} \left( \prod_{u \in U} \frac{\alpha(\leg(u)+1)+\arm(u)}{\alpha(\leg(u)+1)+\arm(u)+1} \right)F_{V(\tau)\cup U}.
\end{align*}
\end{corollary}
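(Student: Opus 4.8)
The plan is to obtain the corollary by specializing \cref{thm:q} at $q=t^{\alpha}$, multiplying by the normalizing product appearing in the definition of $G_{\gamma}(X;\alpha)$, and letting $t\to 1^{-}$. Since $\ST(\gamma)$ is finite and, for each $\tau$, the inner sum over $U\subseteq W(\tau)$ is finite, every coefficient that appears is a rational function of $t$ (and of $\alpha$) with a well-defined limit; hence I may pass to the limit termwise, while the fundamental functions $F_{V(\tau)\cup U}$, being independent of $t$, are carried along unchanged.

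First I would dispose of the easy factors. Setting $q=t^{\alpha}$ gives $t^{\coinv(\tau)}q^{\maj(\tau)}=t^{\coinv(\tau)+\alpha\maj(\tau)}\to 1$ and $(-t)^{|U|}\to(-1)^{|U|}$. For the factors indexed by $u\in U$ I would use the elementary limit $\lim_{t\to1^{-}}\frac{1-t^{a}}{1-t^{b}}=\frac{a}{b}$ (valid for positive exponents, with $\alpha$ treated as a positive parameter) applied to
\[
\frac{1-q^{\leg(u)+1}t^{\arm(u)}}{1-q^{\leg(u)+1}t^{\arm(u)+1}}
=\frac{1-t^{\alpha(\leg(u)+1)+\arm(u)}}{1-t^{\alpha(\leg(u)+1)+\arm(u)+1}}
\;\longrightarrow\;
\frac{\alpha(\leg(u)+1)+\arm(u)}{\alpha(\leg(u)+1)+\arm(u)+1},
\]
which produces exactly the ratios in the statement.

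The main step is to show that the normalizing product cancels the denominators attached to the cells $u\notin W(\tau)$ and converts the surviving numerators into the claimed factor $\prod_{u\in W(\tau)}(\alpha(\leg(u)+1)+\arm(u)+1)$. This is the same cancellation that turns \eqref{eq:G} into \eqref{eq:jack}: the normalization contributes one factor $\frac{1-q^{\leg(u)+1}t^{\arm(u)+1}}{1-t}$ for each cell $u\in\widehat{\dg}(\gamma)$, and these cancel against the factors $\frac{1-t}{1-q^{\leg(u)+1}t^{\arm(u)+1}}$ occurring in the $\tau$-summand of \cref{thm:q} for precisely the cells $u\in\widehat{\dg}(\gamma)\setminus W(\tau)$. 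What remains is $\prod_{u\in W(\tau)}\frac{1-q^{\leg(u)+1}t^{\arm(u)+1}}{1-t}$, and letting $t\to1^{-}$ sends each factor $\frac{1-t^{\alpha(\leg(u)+1)+\arm(u)+1}}{1-t}$ to $\alpha(\leg(u)+1)+\arm(u)+1$. I expect this bookkeeping --- matching the cells carrying the normalization against the cells omitted from the product in \cref{thm:q}, and checking that the arm/leg conventions agree so that the exponents line up --- to be the only delicate point; the remaining factors limit to $1$ or to the stated rational functions directly. Assembling the four limits inside the (finite) double sum then yields the displayed expansion.
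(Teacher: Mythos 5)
Your proposal is correct and is essentially the paper's own (implicit) argument: the corollary is obtained by exactly this specialization of \cref{thm:q}, mirroring the derivation of \eqref{eq:jack} from \eqref{eq:G} --- the normalizing product cancels the denominators $\frac{1-t}{1-q^{\leg(u)+1}t^{\arm(u)+1}}$ at the cells $u\notin W(\tau)$, the surviving factors at cells of $W(\tau)$ limit to $\alpha(\leg(u)+1)+\arm(u)+1$, and the $U$-factors and the prefactor $t^{\coinv(\tau)}q^{\maj(\tau)}$ limit to the stated ratios and to $1$, respectively. The convention-matching caveat you flag is genuine but is inherited from the paper itself, which tacitly identifies its normalizing product with $\prod_{u\in\widehat{\dg}(\gamma)}\frac{1-q^{\leg(u)+1}t^{\arm(u)+1}}{1-t}$ in passing from \eqref{eq:G} to \eqref{eq:jack}, exactly as you do.
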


\section*{acknowledgement}
The authors would like to thank Jim Haglund for bringing them together. The first and second authors thank the Institute of Mittag-Leffler for their hospitality.
The second author was supported by NSF grants DMS-1704874 and DMS-1953891.

\printbibliography

\end{document}